\newtheorem{theorem}{Theorem}
\newtheorem{lemma}{Lemma}
\newtheorem{definition}{Definition}
\newtheorem{condition}{Condition}
\title{Strong uniform laws of large numbers for bootstrap means and other randomly weighted sums}
\author{Neil A. Spencer and Jeffrey W. Miller}
\date{}
\begin{document}

\maketitle

\begin{abstract}
This article establishes novel strong uniform laws of large numbers for randomly weighted sums such as bootstrap means. 
By leveraging recent advances, these results extend previous work in their general applicability to a wide range of weighting procedures and in their flexibility with respect to the effective bootstrap sample size.
In addition to the standard multinomial bootstrap and the $m$-out-of-$n$ bootstrap, our results apply to a large class of randomly weighted sums involving negatively orthant dependent (NOD) weights, including the Bayesian bootstrap, jackknife, resampling without replacement, simple random sampling with over-replacement, independent weights, and multivariate Gaussian weighting schemes.
Weights are permitted to be non-identically distributed and possibly even negative.
Our proof technique is based on extending a proof of the i.i.d.\ strong uniform law of large numbers to employ strong laws for randomly weighted sums; in particular, we exploit a recent Marcinkiewicz--Zygmund strong law for NOD weighted sums.
\end{abstract}

\section{Introduction}

The bootstrap \citep{efron1994introduction} and related resampling procedures such as bagging \citep{breiman1996bagging}, the Bayesian bootstrap \citep{rubin1981bayesian}, and the jackknife \citep{efron1982jackknife} are widely used general-purpose tools for statistical inference.
In addition to its original purpose of approximating sampling distributions of estimators \citep{efron1979bootstrap}, the bootstrap and its relatives have been applied to a variety of statistical tasks, including model averaging \citep{breiman1996bagging}, approximate Bayesian inference \citep{newton1994approximate}, outlier detection \citep{singh2003bootlier}, robust Bayesian inference \citep{huggins2019robust,10.1214/21-BA1301}, and causal inference \citep{little2019causal}.  

Due to its versatility, extensions of the bootstrap are frequently proposed to address new statistical questions. When establishing the properties of such methods, bootstrap versions of classical asymptotic results play a key role, such as the weak law of large numbers \cite[Theorem 1]{athreya1984laws}, the strong law of large numbers \cite[Theorem 2]{athreya1984laws}, and the central limit theorem \citep{singh1981asymptotic} for bootstrap means.

Meanwhile, it is sometimes important to obtain convergence over an entire collection of random variables simultaneously, thus guaranteeing convergence for even the worst case in the collection. 
To this end, several authors have established uniform laws of large numbers for bootstrap means.  \citet[Theorems 2.6 and 3.5]{gine1990bootstrapping} proved weak uniform laws of large numbers for the standard multinomial bootstrap, that is, with $\mathrm{Multinomial}\big(n,  (1/n, \ldots, 1/n))$ weights.
\citet[Theorem 3.6.16]{vaart1996weak} proved an analogous result for exchangeably weighted sums such as the Bayesian bootstrap.  As weak laws, these show convergence in probability, but often one needs almost sure convergence, that is, a strong law.





Strong uniform laws of large numbers for bootstrap means are provided by \citet[Section 10.2]{kosorok2008introduction} for weighted sums with independent and identically distributed (i.i.d.)\ weights, with weights obtained by normalizing $n$ i.i.d.\ random variables, and with $\mathrm{Multinomial}\big(n,  (1/n, \ldots, 1/n))$ weights (Theorem 10.13, Corollary 10.14, and Theorem 10.15, respectively). However, these results do not apply to more general schemes such as the jackknife, resampling without replacement, and $\mathrm{Multinomial}\big(m_n, (p_{n 1},\ldots,p_{n n}))$ weights. 

In this article, we present new strong uniform laws of large numbers for randomly weighted sums, aiming to fill these gaps in the literature.  Our first result applies to the case of $\mathrm{Multinomial}\big(m_n, (1/n,\ldots,1/n))$ weights, known as the $m$-out-of-$n$ bootstrap. Our second and third results apply more generally to a large class of randomly weighted sums that involve negatively orthant dependent (NOD) weights.
This covers a wide range of weighting schemes, including the Bayesian bootstrap \citep{rubin1981bayesian}, various versions of the jackknife \citep{efron1982jackknife,chatterjee2005generalized}, resampling without replacement \citep{bickel2012resampling}, simple random sampling with over-replacement \citep{antal2011simple}, $\mathrm{Multinomial}\big(m_n, (p_{n 1},\ldots,p_{n n}))$ weights \citep[Section 3]{antal2011direct}, independent weights \citep{newton1994approximate}, and even schemes involving negative weights such as multivariate Gaussian weights with non-positive correlations \citep{patak2009generalized}.
All three theorems are flexible in terms of the effective bootstrap sample size $m_n$ (that is, the sum of the weights), for instance, allowing $m_n = o(n)$ which is of particular interest for certain applications \citep{bickel2012resampling,10.1214/21-BA1301}.

The article is organized as follows. We present our main results in Section~\ref{section:main-results}, Section~\label{sec:examples} provides some examples of relevant re-weighting schemes, and the proofs of the main results are provided in Section~\ref{section:proofs}. 

\section{Main results}
\label{section:main-results}

We present three strong uniform laws of large numbers: Theorem~\ref{theorem:ulln1} for the multinomial bootstrap, Theorem~\ref{theorem:ulln2} for more general randomly weighted sums, and Theorem~\ref{theorem:ulln3} which establishes faster convergence rates under stronger regularity and moment conditions than Theorem~\ref{theorem:ulln2}. All three results are obtained via extensions of the proof of the i.i.d.\ strong uniform law of large numbers presented by \cite{GhoshJ.K2003BN}. Specifically, our proof of Theorem~\ref{theorem:ulln1} involves replacing the traditional strong law of large numbers with a strong law of large numbers for bootstrap means as presented by \cite{arenal1996unconditional}. Similarly, Theorems~\ref{theorem:ulln2} and \ref{theorem:ulln3} rely on a strong law of large numbers for randomly weighted sums of random variables presented by \cite{chen2019strong}. 

\begin{condition}
\label{condition:function-class}
Suppose $\Theta$ is a compact subset of a separable metric space. Let $u \in \mathbb{N}$ and $H(\theta,x)$ be a real-valued function on $\Theta \times \mathbb{R}^u$ such that
\begin{itemize}
\item[(i)] for each $x \in \mathbb{R}^u$, $\theta\mapsto H(\theta, x)$ is continuous on $\Theta$, and
\item[(ii)] for each $\theta \in \Theta$, $x\mapsto H(\theta,x)$ is a measurable function on $\mathbb{R}^u$.
\end{itemize}
\end{condition}

\begin{theorem}\label{theorem:ulln1}
Let $X_1,X_2,\ldots \in\mathbb{R}^u$ i.i.d.\ and for $n\in\mathbb{N}$ independently, let
\begin{align*}
(W_{n 1},\ldots,W_{n n}) \sim \mathrm{Multinomial}\big(m_n,  (1/n, \ldots, 1/n)\big)
\end{align*}
independently of $(X_1,X_2,\ldots)$,
where $m_n$ is a positive integer for $n\in\mathbb{N}$. Assume Condition~\ref{condition:function-class} and suppose there exists $\delta \in [0,1)$ such that
\begin{align}
\lim_{n \rightarrow \infty} \frac{n^{1 - \delta} \log(n)}{m_n} = 0 \text{~~ and ~~}\mathbb{E}\left(\sup_{\theta \in \Theta} \left|H(\theta, X_1) \right|^{\frac{1}{1 - \delta}}\right) < \infty \label{assump1},
\end{align}
Then
\begin{align}\label{equation:ulln1}
\sup_{\theta \in \Theta} \bigg|\frac{1}{m_n} \sum_{j=1}^{n} W_{nj} H(\theta, X_j) - \mathbb{E}(H(\theta, X_1)) \bigg| \xrightarrow[n\to\infty]{\mathrm{a.s.}} 0. 
\end{align}

\end{theorem}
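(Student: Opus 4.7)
The plan is to adapt the standard bracketing proof of the i.i.d.\ strong uniform law of large numbers (as in Ghosh, 2003), replacing the role of Kolmogorov's SLLN by the multinomial bootstrap SLLN of Csörgő (1992). Since the multinomial weights $W_{nj}$ are nonnegative, a monotone sandwich over a finite $\epsilon$-net of $\Theta$ is available, and the whole argument reduces to applying the bootstrap SLLN pointwise to finitely many bracketing functions.

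First I would fix $\epsilon>0$ and, for each $\theta\in\Theta$ and $r>0$, define the upper and lower envelopes
\begin{align*}
U_\theta^r(x) = \sup_{\theta'\in B(\theta,r)\cap D} H(\theta',x), \qquad L_\theta^r(x) = \inf_{\theta'\in B(\theta,r)\cap D} H(\theta',x),
\end{align*}
where $D\subset\Theta$ is a countable dense set (available since $\Theta$ is a separable metric space). By continuity of $H(\cdot,x)$ these equal the sup and inf over the full balls, and as countable sup/inf of measurable maps they are measurable in $x$. The integrable envelope $\sup_{\theta}|H(\theta,\cdot)|$ provided by Condition~\ref{condition:function-class} and assumption (a) or (b) dominates both $U_\theta^r$ and $L_\theta^r$, so dominated convergence yields $\mathbb{E}[U_\theta^r(X_1)]\downarrow\mathbb{E}[H(\theta,X_1)]$ and $\mathbb{E}[L_\theta^r(X_1)]\uparrow\mathbb{E}[H(\theta,X_1)]$ as $r\downarrow 0$. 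Hence for every $\theta$ I can pick $r_\theta>0$ with $\mathbb{E}[U_\theta^{r_\theta}(X_1)-L_\theta^{r_\theta}(X_1)]<\epsilon$, and by compactness the cover $\{B(\theta,r_\theta)\}_{\theta\in\Theta}$ has a finite subcover, indexed say by $\theta_1,\dots,\theta_K$.

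For any $\theta\in\Theta$, pick $k$ with $\theta\in B(\theta_k,r_{\theta_k})$. Using $W_{nj}\ge 0$, I obtain the two-sided bracket
\begin{align*}
\frac{1}{m_n}\sum_{j=1}^n W_{nj} L_{\theta_k}^{r_{\theta_k}}(X_j)
\le \frac{1}{m_n}\sum_{j=1}^n W_{nj} H(\theta,X_j)
\le \frac{1}{m_n}\sum_{j=1}^n W_{nj} U_{\theta_k}^{r_{\theta_k}}(X_j),
\end{align*}
and the same inequalities hold with each bootstrap average replaced by its expectation. Since both envelope functions inherit the moment bound from $\sup_{\theta}|H(\theta,\cdot)|$, condition (a) or (b) permits me to apply Csörgő's bootstrap SLLN to each of the $2K$ envelope sequences, obtaining a.s.\ convergence of each bootstrap average to its expectation. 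Taking the difference of the two bracketing inequalities and using $\mathbb{E}[U_{\theta_k}^{r_{\theta_k}}-L_{\theta_k}^{r_{\theta_k}}]<\epsilon$, I conclude that $\limsup_n \sup_{\theta\in\Theta}|\tfrac{1}{m_n}\sum_j W_{nj}H(\theta,X_j)-\mathbb{E}H(\theta,X_1)|\le 2\epsilon$ almost surely on the (full-measure) intersection of the finitely many SLLN events; sending $\epsilon$ through a sequence tending to $0$ and intersecting the resulting events completes the proof.

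The main obstacle is a bookkeeping one rather than a conceptual one: verifying that the size/moment pairs in conditions (a) and (b) are precisely those required by the version of Csörgő's bootstrap SLLN I invoke, and that the two cases are what allow the transition from a Marcinkiewicz--Zygmund-type $(1+\delta)$-moment regime (with polynomial lower bound on $m_n$) to a second-moment regime (with merely poly-logarithmic lower bound on $m_n$). Everything else—measurability of the envelopes, dominated convergence, the compactness step, and the sandwich—is routine once the nonnegativity of the multinomial weights is exploited.
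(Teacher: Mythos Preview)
Your proposal is correct and follows essentially the same approach as the paper: a finite $\epsilon$-cover of the compact $\Theta$ combined with pointwise application of Cs\"org\H{o}'s bootstrap SLLN to the bracketing functions, exploiting $W_{nj}\ge 0$. The only cosmetic difference is that the paper brackets via the oscillation function $\eta(\theta,x,r)=\sup_{\theta'\in B_r(\theta)}|(H(\theta,x)-\mu(\theta))-(H(\theta',x)-\mu(\theta'))|$ rather than upper/lower envelopes, but the structure and the key input (Lemma~\ref{lemma:bootstrap-mean}) are identical.
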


In Theorems~\ref{theorem:ulln2} and \ref{theorem:ulln3}, we generalize beyond the standard multinomial bootstrap to weighting schemes involving NOD random variables \citep{lehmann1966some,chen2019strong}.

\begin{definition}
A finite collection of random variables $X_1, \ldots, X_n\in\mathbb{R}$ is said to be \emph{negatively orthant dependent (NOD)} if 
\begin{align}
\mathbb{P}\left(X_1 \leq x_1, \ldots, X_n \leq x_n \right) &\leq \prod_{i=1}^n \mathbb{P}(X_i \leq x_i)
\end{align}
and
\begin{align}
\mathbb{P}\left(X_1 > x_1, \ldots, X_n > x_n \right) &\leq \prod_{i=1}^n \mathbb{P}(X_i > x_i)
\end{align}
for all $x_1, \ldots, x_n \in \mathbb{R}$. An infinite collection of random variables is NOD if every finite subcollection is NOD.
\end{definition}
Any collection of independent random variables is NOD, and many commonly used multivariate distributions are NOD including the multinomial distribution, the Dirichlet distribution, the Dirichlet-multinomial distribution, the multivariate hypergeometric distribution, convolutions of multinomial distributions, and multivariate Gaussian distributions for which the correlations are all non-positive \citep{joag1983negative}.

\begin{theorem}\label{theorem:ulln2}
Let $X_1,X_2,\ldots \in \mathbb{R}^u$ i.i.d.\ and for $n\in\mathbb{N}$ independently, let $W_{n 1},\ldots,W_{n n}\in\mathbb{R}$ be NOD random variables, independent of $(X_1,X_2,\ldots)$.
Assume Condition~\ref{condition:function-class} and suppose $\mathbb{E}(\sup_{\theta\in\Theta}|H(\theta,X_1)|^{\beta}) < \infty$, $\sum_{j=1}^n \mathbb{E}(|W_{n j}|^\alpha) = O(n)$, and $\sum_{j=1}^n \mathbb{E}(|W_{n j}|) = O(n^{1/p})$ where $p \in [1,2)$ and $\alpha > 2p$ and $\beta > 1$  satisfy $\alpha^{-1} + \beta^{-1} = p^{-1}$. Then
\begin{align} 
\sup_{\theta \in \Theta} \bigg|\frac{1}{n^{1/p}} \sum_{j=1}^{n} \Big( W_{nj} H(\theta, X_j) - \mathbb{E}(W_{nj} H(\theta, X_j))\Big) \bigg| \xrightarrow[n\to\infty]{\mathrm{a.s.}} 0.  \label{equation:ulln2}
\end{align}
\end{theorem}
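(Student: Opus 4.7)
The plan is to adapt the proof of the i.i.d.\ strong uniform law of large numbers from \cite{GhoshJ.K2003BN}, replacing the classical strong law by the Chen--Sung Marcinkiewicz--Zygmund strong law for NOD weighted sums. The overall structure has three ingredients: (i) use compactness of $\Theta$ and continuity of $H$ to reduce the supremum over $\theta \in \Theta$ to a maximum over a finite net plus a local oscillation term; (ii) handle the finite max via Chen--Sung; and (iii) render the oscillation arbitrarily small by shrinking the net, once more invoking Chen--Sung on envelope functions.

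For step (i), given $\delta > 0$ I would cover $\Theta$ by finitely many balls $B(\theta_1,\delta), \ldots, B(\theta_K,\delta)$ and define the upper and lower envelopes $A_\delta(\theta_i,x) := \sup_{\theta' \in B(\theta_i,\delta) \cap \Theta} H(\theta',x)$ and $B_\delta(\theta_i,x) := \inf_{\theta' \in B(\theta_i,\delta) \cap \Theta} H(\theta',x)$, which are measurable by separability of $\Theta$ and continuity of $H$ in $\theta$. Because weights may be negative, I would split $W_{nj} = W_{nj}^+ - W_{nj}^-$ into nonnegative and nonpositive parts, so that for any $\theta' \in B(\theta_i,\delta)$ one has the deterministic sandwich $W_{nj}^+ B_\delta(\theta_i,X_j) - W_{nj}^- A_\delta(\theta_i,X_j) \le W_{nj} H(\theta',X_j) \le W_{nj}^+ A_\delta(\theta_i,X_j) - W_{nj}^- B_\delta(\theta_i,X_j)$, uniformly in $\theta' \in B(\theta_i,\delta)$, with all envelopes depending only on $\theta_i$.

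For step (ii), I would apply the Chen--Sung strong law to the four envelope-weighted sums. Because NOD is preserved under coordinate-wise monotone maps, $\{W_{nj}^+\}_j$ and $\{W_{nj}^-\}_j$ are each NOD, and both inherit from $|W_{nj}|$ the moment bounds $\sum_j \E (W_{nj}^\pm)^\alpha = O(n)$ and $\sum_j \E W_{nj}^\pm = O(n^{1/p})$. The random variables $A_\delta(\theta_i,X_j)$ and $B_\delta(\theta_i,X_j)$ are i.i.d.\ in $j$ and dominated in absolute value by $G(X_j) := \sup_{\theta \in \Theta} |H(\theta,X_j)|$, which has finite $\beta$-th moment by hypothesis. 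Chen--Sung then gives, for each $i$ and each sign choice, $n^{-1/p} \sum_j (W_{nj}^{\pm} A_\delta(\theta_i,X_j) - \E[W_{nj}^{\pm} A_\delta(\theta_i,X_j)]) \to 0$ almost surely, and similarly for $B_\delta$; the maximum over the finite index set $\{1,\ldots,K\}$ preserves this convergence.

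For step (iii), the only residual quantity is the expected gap between the upper and lower bounds of the sandwich. Using independence of $W_{nj}$ and $X_j$, this gap is at most $n^{-1/p}\sum_j \E|W_{nj}| \cdot \E[(A_\delta - B_\delta)(\theta_i,X_1)] = O(\E[(A_\delta - B_\delta)(\theta_i,X_1)])$. Continuity of $\theta \mapsto H(\theta,x)$ and dominated convergence (dominant $2G$, integrable since $\beta > 1$) force this expectation to zero as $\delta \downarrow 0$. Assembling the three ingredients, almost surely the $\limsup_{n\to\infty}$ of the supremum in (\ref{equation:ulln2}) is bounded by a constant multiple of $\max_i \E[(A_\delta - B_\delta)(\theta_i,X_1)]$; sending $\delta \downarrow 0$ then yields the theorem. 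The hard part will be the careful verification that the sign-decomposed weights $W_{nj}^\pm$ inherit NOD and satisfy the Chen--Sung moment hypotheses, together with the bookkeeping needed so that the four sign/envelope combinations pass cleanly through centering to produce a single sandwich.
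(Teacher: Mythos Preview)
Your proposal is correct and follows essentially the same route as the paper: reduce to a finite net via compactness, invoke the Chen--Sung NOD strong law on the fixed-$\theta$ sums, decompose $W_{nj}=W_{nj}^{+}-W_{nj}^{-}$ and use that coordinatewise monotone maps preserve NOD, control the local oscillation by dominated convergence, and cap the residual via $\sum_j \mathbb{E}|W_{nj}|=O(n^{1/p})$. The only cosmetic difference is packaging: you sandwich $W_{nj}H(\theta,X_j)$ between upper/lower envelopes $A_\delta,B_\delta$, whereas the paper works with the oscillation function $\eta(\theta,x,r)=\sup_{\theta'\in B_r(\theta)}|(H(\theta,x)-\mu(\theta))-(H(\theta',x)-\mu(\theta'))|$ and the decomposition $W_{nj}H(\theta,X_j)-\mathbb{E}(W_{nj})\mu(\theta)=W_{nj}\big[(H(\theta)-\mu(\theta))-(H(\theta_i)-\mu(\theta_i))\big]+(W_{nj}-\mathbb{E}W_{nj})(\mu(\theta)-\mu(\theta_i))+\big(W_{nj}H(\theta_i)-\mathbb{E}(W_{nj})\mu(\theta_i)\big)$, applying the sign split only to the first term to produce $|W_{nj}|\eta$. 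One small point to tidy in your write-up: with a single radius $\delta$ you implicitly need $\sup_{\theta\in\Theta}\mathbb{E}[(A_\delta-B_\delta)(\theta,X_1)]\to 0$; the paper sidesteps this by choosing $\theta$-dependent radii $r_i$ with $\mathbb{E}(\eta(\theta_i,X_1,r_i))<\varepsilon$ and then extracting a finite subcover, which is the cleanest way to close the argument.
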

In particular, if $n^{-1/p}\sum_{j=1}^n \mathbb{E}(W_{n j}) \to 1$, then Equation~\ref{equation:ulln2} is analogous to Equation~\ref{equation:ulln1} with $n^{1/p}$ in place of $m_n$.
While the moment condition on $H(\theta,X_1)$ in Theorem~\ref{theorem:ulln2} is slightly more stringent than in Theorem~\ref{theorem:ulln1}, it applies to a more general class of resampling procedures.
For instance, the distribution of $W_{n j}$ can be different for each $n$ and $j$; in particular, there is no assumption that $W_{n 1},\ldots,W_{n n}$ are exchangeable or even identically distributed.
Further, the weights $W_{n j}$ are not restricted to being non-negative; thus, random weights taking positive and negative values are permitted.

The main limitation of Theorem~\ref{theorem:ulln2} is that whenever $p>1$, the condition that $\sum_{j=1}^n \mathbb{E}(|W_{n j}|) = O(n^{1/p})$ requires the weights to be getting smaller as $n$ increases.  One can scale the weights up by a factor of $n^{1-1/p}$, but then the leading factor of $1/n^{1/p}$ in Equation~\ref{equation:ulln2} becomes $1/n$, effectively reverting back to the standard rate of convergence.
In Theorem~\ref{theorem:ulln3}, we show that this condition can be dropped if we assume stronger regularity and moment conditions.

\begin{condition}
Assume $\theta\mapsto H(\theta,x)$ is uniformly locally H\"{o}lder continuous, in the sense that there exist $a>0$, $M>0$, and $\delta>0$ such that for all $x\in\mathbb{R}^u$, $\theta,\theta'\in\Theta$, if $d(\theta,\theta')<\delta$ then $|H(\theta,x)-H(\theta',x)|\leq M d(\theta,\theta')^a$.
\label{condition:holder}
\end{condition}

\begin{condition}
Assume $N(r) \leq c\, r^{-D}$ for some $c>0$ and $D>0$, where $N(r)$ is the smallest number of open balls of radius $r$, centered at points in $\Theta$, needed to cover $\Theta$.
\label{condition:covering-number}
\end{condition}

Condition~\ref{condition:covering-number} holds for any compact $\Theta\subset\mathbb{R}^D$; indeed, by \citet[Example 27.1]{shalev2014understanding}, $N_1(r) \leq c_1 r^{-D}$ where $N_1(r)$
is the number of $r$-balls needed to cover $\Theta$, centered at any points in $\mathbb{R}^D$, and it is straightforward to verify that $N(r) \leq N_1(r/2)$.

\begin{theorem}\label{theorem:ulln3}
Let $X_1,X_2,\ldots \in \mathbb{R}^u$ i.i.d.\ and for $n\in\mathbb{N}$ independently, let $W_{n 1},\ldots,W_{n n}\in\mathbb{R}$ be NOD random variables, independent of $(X_1,X_2,\ldots)$.
Assume Conditions~\ref{condition:function-class}, \ref{condition:holder}, and \ref{condition:covering-number}.
Let $p\in(1,2)$ and suppose $\sup_{\theta\in\Theta}\mathbb{E}(|H(\theta,X_1)|^{q}) < \infty$, $\sum_{j=1}^n \mathbb{E}(|W_{n j}|^q) = O(n)$ for some $q > ((1-1/p)D/a + 1)/(1/p-1/2)$.  Then
\begin{align} 
\sup_{\theta \in \Theta} \bigg|\frac{1}{n^{1/p}} \sum_{j=1}^{n} \Big( W_{nj} H(\theta, X_j) - \mathbb{E}(W_{nj} H(\theta, X_j))\Big) \bigg| \xrightarrow[n\to\infty]{\mathrm{a.s.}} 0.  \label{equation:ulln3}
\end{align}
\end{theorem}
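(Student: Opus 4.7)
The plan is to extend the proof of Theorem~\ref{theorem:ulln2} via a covering argument that converts the uniform statement into control of two pieces: a maximum of $|T_n(\cdot)|$ over a finite $r_n$-net of $\Theta$, and a Hölder discretization error; both will be bounded by a Rosenthal-type $q$-th moment inequality for NOD sums followed by Borel--Cantelli. Throughout, write $T_n(\theta) := n^{-1/p}\sum_{j=1}^n [W_{nj} H(\theta, X_j) - \mathbb{E}(W_{nj} H(\theta, X_j))]$.

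The hypothesis on $q$ rearranges to $(1-1/p)/a < (q(1/p-1/2)-1)/D$, so I would fix $\gamma$ in this open interval, set $r_n := n^{-\gamma}$, and use Condition~\ref{condition:covering-number} to obtain an $r_n$-net $\theta_1,\ldots,\theta_{N_n}$ of size $N_n \leq c n^{\gamma D}$. For each $\theta\in\Theta$ pick a net point $\theta_{k(\theta)}$ within $r_n$ of $\theta$, and split $\sup_\theta|T_n(\theta)| \leq \max_k |T_n(\theta_k)| + \sup_\theta|T_n(\theta)-T_n(\theta_{k(\theta)})|$. For the Hölder error, Condition~\ref{condition:holder} gives $|H(\theta,x)-H(\theta_{k(\theta)},x)| \leq M r_n^a$ once $r_n<\delta$, so the error is bounded by $(Mr_n^a/n^{1/p})(\sum_j |W_{nj}| + \sum_j \mathbb{E}|W_{nj}|)$. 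Since $\sum_j \mathbb{E}|W_{nj}| \leq n^{1-1/q}(\sum_j \mathbb{E}|W_{nj}|^q)^{1/q} = O(n)$ by Hölder's inequality, the deterministic piece is $O(n^{1-1/p-a\gamma}) = o(1)$. For the random piece, I would write $|W_{nj}| = W_{nj}^+ + W_{nj}^-$; since coordinatewise monotone maps preserve NOD, both $(W_{nj}^\pm)_j$ are NOD, and a Rosenthal-type $q$-th moment inequality for NOD sums (of the kind underlying \cite{chen2019strong}) yields $\mathbb{E}|\sum_j(|W_{nj}|-\mathbb{E}|W_{nj}|)|^q = O(n^{q/2})$; Markov and Borel--Cantelli then give almost-sure decay of the Hölder error.

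For the maximum, set $S_{n,k} := n^{1/p} T_n(\theta_k)$. I would decompose $W_{nj}H(\theta_k,X_j)$ into four pieces via the positive and negative parts of $W_{nj}$ and $H(\theta_k,\cdot)$; conditionally on $(X_j)$, each piece is a sum of NOD weights scaled by nonnegative constants, so the same Rosenthal-type bound applies, and taking expectations using $\sum_j \mathbb{E}|W_{nj}|^q = O(n)$ and $\sup_\theta \mathbb{E}|H(\theta,X_1)|^q < \infty$ yields $\mathbb{E}|S_{n,k}|^q = O(n^{q/2})$ uniformly in $k$. A union bound and Markov then give $\mathbb{P}(\max_k |T_n(\theta_k)| > \varepsilon) = O(n^{\gamma D + q/2 - q/p})$, which is summable in $n$ by the upper bound on $\gamma$, and Borel--Cantelli concludes.

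The main obstacle will be establishing the $q$-th moment bound for $S_{n,k}$ with the sharp rate $n^{q/2}$: the weights are NOD but multiplication by $H(\theta_k, X_j)$ destroys the NOD property when $H$ takes both signs, so one must carefully reduce to sums of NOD weights scaled by nonnegative random coefficients via the sign decomposition, and handle centering under conditioning on the $X_j$. The precise lower bound on $q$ in the hypothesis is exactly what is needed so that a single choice of $\gamma$ simultaneously makes the Hölder error vanish and the maximal tail probabilities summable over $n$.
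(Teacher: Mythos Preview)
Your proposal is correct and follows essentially the same route as the paper: a polynomially shrinking cover of $\Theta$, H\"older control of the discretization error, and a Rosenthal-type $q$-th moment bound for NOD sums combined with a union bound and Borel--Cantelli for the maximum over the net. The paper's execution differs only cosmetically---it reduces to nonnegative $W_{nj}$ and $H$ at the outset so that the products $W_{nj}H(\theta_{ni},X_j)$ are themselves NOD and the Asadian--Fakoor--Bozorgnia inequality applies directly without conditioning on $X$, and it controls $n^{-1}\sum_j W_{nj}$ via Lemma~\ref{lemma:general-mean} rather than a separate Rosenthal/Borel--Cantelli step.
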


Note that as $p\to 1$ from above, the bound on the power $q$ approaches $2$.

\section{Examples of relevant resampling schemes}\label{sec:examples}

A key condition of Theorems~\ref{theorem:ulln2} and \ref{theorem:ulln3} is that the weights $W_{n 1},\ldots,W_{n n}$ must be NOD. It turns out that most popular resampling techniques satisfy this condition.

For instance, the $m$-out-of-$n$ bootstrap corresponds to $\mathrm{Multinomial}\big(m_n, (1/n,\ldots,1/n))$ weights, unequal probability with replacement corresponds to $\mathrm{Multinomial}\big(m_n, (p_{n 1},\ldots,p_{n n}))$ weights \citep{antal2011direct}, the Bayesian bootstrap corresponds to Dirichlet weights, the delete-$d$ jackknife and resampling without replacement correspond to multivariate hypergeometric weights \citep{chatterjee2005generalized}, the weighted likelihood bootstrap is equivalent to using independent weights \citep{newton1994approximate}, and the reweighting scheme of \cite{patak2009generalized} employs multivariate Gaussian weights with non-positive correlations. All of these distributions satisfy the NOD requirement \citep{joag1983negative}.

The NOD requirement is also satisfied by less standard reweighting schemes such as the downweight-$d$ jackknife \citep{chatterjee2005generalized} and simple random sampling with over-replacement \citep{antal2011simple}. 
In the downweight-$d$ jackknife, $d$ indices $i_{1}, \ldots, i_{d}$ are selected uniformly at random to be downweighted such that $W_{n i_{1}}= \cdots = W_{n i_{d}} = d/n$, whereas the remaining $n-d$ indices are upweighted to $1 + d/n$. These weights can be viewed as a monotonic transformation of the multivariate hypergeometric weights corresponding to the delete-$d$ jackknife, and thus are NOD \citep{chen2019strong}.
For simple random sampling with over-replacement, the weights can be viewed as the conditional distribution of
a sequence of $n$ independent geometric random variables, given that their sum equals $n$.   Geometric random variables satisfy the conditions of Theorem 2.6 from \citet{joag1983negative} according to \citet[3.1]{efron1965increasing}, implying that the resulting weights are NOD.

A general family of applicable NOD reweighting schemes can also be derived from the Farlie-Gumbel-Morgenstern (FGM) $n$-copula \citep[page 108]{nelsen2006introduction}. For variables $u_1, \ldots, u_n \in [0,1]$, an FGM copula is characterized by $2^n - n -1$ parameters---one for each subset of $\left\{1, \ldots, n\right\}$ that contains least two elements. Let $\theta_{1,2}, \theta_{1,3}, \ldots, \theta_{1, 2, \ldots, n} \in [-1,1]$ denote these parameters. The density of the FGM copula is given by
\begin{align}
f(u) &= 1 + \sum_{k=2}^n \sum_{1 \leq j_1 < j_2 < \cdots < j_k \leq n} \theta_{j_1, j_2, \cdots ,j_k} (1-2 u_{j_1}) (1 - 2 u_{j_2}) \cdots (1 - 2 u_{j_k}).
\end{align}
where the $\theta$ values must adhere to the constraints 
 \begin{align}
\sum_{k=2}^n \sum_{1 \leq j_1 < j_2 < \cdots < j_k \leq n} \theta_{j_1, j_2, \cdots, j_k} \epsilon_{j_1} \epsilon_{j_2} \cdots \epsilon_{j_k} \geq -1
\end{align}
for all $(\epsilon_1, \ldots, \epsilon_{n}) \in \left\{-1, 1\right\}^n$ to ensure non-negativity of the density function. It follows that each $u_{j}$ is marginally uniformly distributed, and for each $x \in [0,1]^n$,
\begin{align}
\mathbb{P}\left(u_{1} \leq x_{1},  \ldots, u_{n} \leq x_{n} \right) &= \left(\prod_{j=1}^n x_i\right) \left(1 + \sum_{k=2}^n   \sum_{1 \leq j_1 < j_2 < \cdots < j_k \leq n} \theta_{j_1, j_2, \cdots, j_k}\prod_{\ell = 1}^k(1-x_{j_\ell}) \right),\\
\mathbb{P}\left(u_{1} > x_{1},  \ldots, u_{n} > x_{n} \right) &= \left(\prod_{j=1}^n (1-x_i)\right) \left(1 + \sum_{k=2}^n   \sum_{1 \leq j_1 < j_2 < \cdots < j_k \leq n} \theta_{j_1, j_2, \cdots, j_k} (-1)^k \prod_{\ell = 1}^k x_{j_\ell} \right).
\end{align}
Therefore, for any $x_1, \ldots, x_n \in [0,1]$,
\begin{align}
\prod_{i=1}^n\mathbb{P}\left(u_{i} \leq x_{i}\right) = \prod_{i=1}^n x_i, \text{  and  } \prod_{i=1}^n\mathbb{P}\left(u_{i} > x_{i}\right) = \prod_{i=1}^n (1-x_i).
\end{align}
A FGM $n$-copula is thus NOD if and only if
\begin{align}
 \sum_{k=2}^n   \sum_{1 \leq j_1 < j_2 < \cdots < j_k \leq n} \theta_{j_1 j_2 \cdots j_k} \prod_{\ell = 1}^k(1-x_{j_\ell}) \leq 0, \text{ and} \label{criterion1}\\
 \sum_{k=2}^n   \sum_{1 \leq j_1 < j_2 < \cdots < j_k \leq n} \theta_{j_1 j_2 \cdots j_k} (-1)^k \prod_{\ell = 1}^k x_{j_\ell} \leq 0 \label{criterion2}
\end{align}
for all $x_1, \ldots, x_n \in [0,1]$. Because NOD is preserved under monotonic transformations, it follows that any multivariate distribution corresponding to such a FGM copula with $\theta$ satisfying the criteria in \ref{criterion1} and \ref{criterion2} will be NOD.

Finally, the condition that $\sum_{j=1}^n \mathbb{E}(|W_{n j}|^q) = O(n)$ in Theorem~\ref{theorem:ulln3} holds for all of the aforementioned reweighting procedures without any additional assumptions, except for a few cases. For the $\mathrm{Multinomial}\big(m_n, (p_{n 1},\ldots,p_{n n}))$ case, it holds as long as there exists a constant $\kappa > 0$ such that $p_{n j} < \kappa/n$ for all $j$ and $n$. For the Gaussian reweighting scheme \citep{patak2009generalized}, it holds as long as $\sup_{n,j} \mathrm{Var}(W_{nj})  < \infty$. For the independent weights and FGM copula cases, it holds when $\sup_{n,j} \mathbb{E}(|W_{n j}|^q) < \infty$.

\section{Proofs}
\label{section:proofs}

We begin by stating a strong law of large numbers of the bootstrap mean due to \citet[Theorem 2.1]{arenal1996unconditional}.
This lemma plays a key role in our proof of Theorem~\ref{theorem:ulln1}.

\begin{lemma}\label{lemma:bootstrap-mean}
Let $Z_1,Z_2,\ldots \in\mathbb{R}$ i.i.d.\ and for $n\in\mathbb{N}$ independently, let
\begin{align*}
(W_{n 1},\ldots,W_{n n}) \sim \mathrm{Multinomial}\big(m_n,  (1/n, \ldots, 1/n)\big)
\end{align*}
independently of $(Z_1,Z_2,\ldots)$, where $m_n$ is a positive integer for $n\in\mathbb{N}$. Suppose there exists a $\delta \in [0,1)$ such that
\begin{align}
\lim_{n \rightarrow \infty} \frac{n^{1 - \delta} \log(n)}{m_n} = 0 \text{~~ and ~~}\mathbb{E}\big( |Z_1|^{\frac{1}{1 - \delta}}\big) < \infty.
\end{align}
Then
\begin{align}
 \frac{1}{m_n} \sum_{j=1}^{n} W_{n j} Z_j \xrightarrow[n\to\infty]{\mathrm{a.s.}} \mathbb{E}(Z_1). 
\end{align}
\end{lemma}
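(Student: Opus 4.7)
My plan is to exploit the index representation of the multinomial distribution and reduce the bootstrap average to two more tractable pieces. Let $I_1,\ldots,I_{m_n}$ be i.i.d.\ uniform on $\{1,\ldots,n\}$, independent of $(Z_j)$; then $\sum_{j=1}^n W_{n j} Z_j \stackrel{d}{=} \sum_{k=1}^{m_n} Z_{I_k}$, and with $\bar Z_n := n^{-1}\sum_{j=1}^n Z_j$ I would decompose
\begin{align*}
\frac{1}{m_n}\sum_{k=1}^{m_n} Z_{I_k} - \mathbb{E}(Z_1) \;=\; \bigg(\frac{1}{m_n}\sum_{k=1}^{m_n} Z_{I_k} - \bar Z_n\bigg) + \big(\bar Z_n - \mathbb{E}(Z_1)\big) \;=:\; A_n + B_n.
\end{align*}
Since both hypotheses imply $\mathbb{E}|Z_1| < \infty$, Kolmogorov's classical SLLN disposes of $B_n$, and the problem reduces to showing $A_n \xrightarrow[n\to\infty]{\mathrm{a.s.}} 0$. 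Conditional on the data $\mathcal{Z} := \sigma(Z_1,Z_2,\ldots)$, the summands $Z_{I_k} - \bar Z_n$ are i.i.d.\ with mean zero and empirical absolute moments $n^{-1}\sum_j |Z_j - \bar Z_n|^q$, and this is the common starting point for the two cases.

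In case~(a), with $p := 1+\delta \in [1,2)$, the conditional von Bahr--Esseen inequality yields $\mathbb{E}|A_n|^p \leq 2 m_n^{-\delta}\mathbb{E}|Z_1|^p$, which is not summable via Markov alone when $m_n$ is only of order $n^{1-\delta}$. The remedy is the standard Marcinkiewicz--Zygmund truncation: write $Z_j = Z_j' + Z_j''$ with $Z_j' := Z_j \mathbf{1}(|Z_j| \leq c_n)$ at a level $c_n$ tuned to $m_n$ and $n$, discard the tail contribution using $\mathbb{E}|Z_1|^{1+\delta} < \infty$, and bound the bounded, centered piece via a higher-moment conditional inequality (Rosenthal with an even exponent $2r$ chosen so that $r\delta > 1$). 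Borel--Cantelli along a dyadic subsequence $n_\ell = 2^\ell$, interpolated to the full sequence by a maximal inequality applied to partial sums of the $Z_{I_k}$, then gives $A_n \to 0$ almost surely.

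In case~(b), the much faster required growth $m_n \geq c\,\log(n)^{1+\delta}$ together with $\mathbb{E}(Z_1^2) < \infty$ invites an exponential-concentration approach. Truncate $Z_j$ at a level $a_n$ chosen so that $\sum_n \mathbb{P}(|Z_1| > a_n) < \infty$ while simultaneously $m_n/a_n^2 \geq (1+\eta)\log n$ eventually (both achievable under the second-moment hypothesis and the growth condition on $m_n$); the conditionally bounded, centered summands then satisfy a Bernstein-type bound of the form $\mathbb{P}(|A_n| > \epsilon \mid \mathcal{Z}) \leq 2\exp(-c\,\epsilon^2 m_n/a_n^2)$, which is summable in $n$, and an unconditional Borel--Cantelli finishes.

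The main obstacle is case~(a) with $\delta$ small: the basic $(1+\delta)$-th moment estimate gives only $O(m_n^{-\delta})$ decay, which is not summable on its own. Extracting the additional decay from Rosenthal's inequality applied to the truncated sums---while balancing the truncation level $c_n$, the Rosenthal exponent, and the permitted growth of $m_n$---is the delicate technical core, and it is the subsequence-plus-maximal-inequality interpolation that ultimately lifts summable decay along a geometric subsequence to almost sure convergence along the full sequence.
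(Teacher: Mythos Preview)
The paper does not actually prove this lemma; it simply cites Theorem~1 of Cs\"{o}rg\H{o} (1992). Your decomposition $A_n+B_n$ via the index representation, together with truncation and conditional moment/exponential bounds, is precisely the architecture of that cited proof, so in spirit you are reconstructing it rather than giving an alternative. Two execution-level gaps remain, however.

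In case~(b) you assert that one can pick $a_n$ with both $\sum_n \mathbb{P}(|Z_1|>a_n)<\infty$ and $m_n/a_n^2 \geq (1+\eta)\log n$. Under a bare second-moment hypothesis the first condition forces, in the worst case, $a_n$ of order at least $\sqrt{n}$ (indeed $\mathbb{E}Z_1^2<\infty$ is equivalent to $\sum_n\mathbb{P}(|Z_1|>\sqrt{n})<\infty$, and no universally smaller sequence works). But the lemma permits $m_n$ as small as $c(\log n)^{1+\delta}$, whence the second condition forces $a_n \lesssim (\log n)^{\delta/2}$. These are incompatible, so the Hoeffding/Bernstein route as you describe it cannot close case~(b); Cs\"{o}rg\H{o}'s argument for this regime is more delicate and does not rest on a sub-Gaussian tail bound after crude truncation.

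In case~(a) your interpolation step is also problematic. You propose Borel--Cantelli along $n_\ell=2^\ell$ and then a maximal inequality ``applied to partial sums of the $Z_{I_k}$'' to fill in intermediate $n$. But by hypothesis the multinomial weights are \emph{independent across $n$}: for $n_\ell\leq n<n_{\ell+1}$ the variables $A_n$ involve disjoint, independent resamplings from different index sets $\{1,\ldots,n\}$ with different sample sizes $m_n$, so they share no partial-sum or monotone structure, and no maximal inequality links them. Controlling $A_{n_\ell}$ tells you nothing about $A_n$ for neighbouring $n$. One must instead produce directly summable bounds $\sum_n\mathbb{P}(|A_n|>\epsilon)<\infty$, which requires tighter coordination of the truncation level and the Rosenthal exponent than your sketch indicates.
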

\begin{proof}
See \citet[Theorem 2.1]{arenal1996unconditional} for the proof.
\end{proof}

\begin{proof}[{\bf Proof of Theorem~\ref{theorem:ulln1}}]
Our argument is based on the proof of Theorem 1.3.3 of \cite{GhoshJ.K2003BN}, except that we use Lemma~\ref{lemma:bootstrap-mean} in place of the strong law of large numbers for i.i.d.\ random variables.

Condition~\ref{condition:function-class} ensures that $x \mapsto \sup_{\theta \in \Theta} |H(\theta,x)|$ is measurable; this can be seen by letting $\theta_1,\theta_2,\ldots$ be a countable dense subset of $\Theta$, verifying that  $\sup_{j \in \mathbb{N}} |H(\theta_j,x)| = \sup_{\theta\in\Theta} |H(\theta,x)|$, and using \citet[Proposition 2.7]{folland2013real}.
Define $\mu(\theta) := \mathbb{E}\left(H(\theta, X_1) \right)$, and note that $\mu(\theta)$ is continuous by the dominated convergence theorem. 
Let $B_r(\theta_0) := \{\theta\in\Theta : d(\theta,\theta_0) < r\}$ denote the open ball of radius $r$ at $\theta_0$, where $d(\cdot,\cdot)$ is the metric on $\Theta$.
For $\theta\in\Theta$, $x\in\mathbb{R}^u$, and $r>0$, define
\begin{align}
\label{equation:eta}
\eta(\theta, x, r) := \sup_{\theta'\in B_r(\theta)} \Big\vert \big(H(\theta, x) - \mu(\theta) \big) - \big(H(\theta', x) - \mu(\theta') \big) \Big\vert,
\end{align}
and observe that by continuity and compactness,
\begin{align}
\eta(\theta,x,r) \leq 2 \sup_{\theta \in \Theta} |H(\theta, x)| + 2 \sup_{\theta \in \Theta}|\mu(\theta)| < \infty.
\end{align}
Applying the dominated convergence theorem again, we have 
$\lim_{r\to 0} \mathbb{E}(\eta(\theta,X_1,r)) = 0$ for all $\theta\in\Theta$.
Thus, for any $\varepsilon > 0$, by compactness of $\Theta$ there exist $K\in\mathbb{N}$, $\theta_1,\ldots,\theta_K\in\Theta$, and $r_1,\ldots,r_K > 0$ such that $\Theta = \textstyle{\bigcup_{i=1}^K} B_{r_i}(\theta_i)$ and $\mathbb{E}\left(\eta(\theta_i, X_1, r_i) \right) < \varepsilon$
for all $i \in \left\{1, \ldots, K \right\}$.  
Choosing $\delta \in (0, 1]$ according to the statement of Theorem~\ref{theorem:ulln1},
\begin{align}
\label{equation:eta-bound}
   \mathbb{E}\big(\eta(\theta_i, X_1, r_i)^{\frac{1}{1-\delta}}\big) & \leq  \mathbb{E}\Big(\big(2 \sup_{\theta\in\Theta} |H(\theta, X_1)| + 2 \sup_{\theta\in\Theta} |\mu(\theta)|\big)^{\frac{1}{1-\delta}}\Big)\\
   &\leq 4^{\frac{1}{1-\delta}} \mathbb{E}\Big(\sup_{\theta\in\Theta} |H(\theta, X_1)|^{\frac{1}{1-\delta}}  \Big) + 4^{\frac{1}{1-\delta}} \sup_{\theta\in\Theta} | \mu(\theta)|^{\frac{1}{1-\delta}} < \infty
\end{align}
since $(x + y)^{\frac{1}{1-\delta}} \leq (2\max\{|x|,|y|\})^{\frac{1}{1-\delta}} \leq 2^{\frac{1}{1-\delta}}(| x|^{\frac{1}{1-\delta}} + |y|^{\frac{1}{1-\delta}})$.
For all $i\in\{1,\ldots,K\}$, by applying Lemma~\ref{lemma:bootstrap-mean} with $Z_j = \eta(\theta_i,X_j,r_i)$ we have that
\begin{align}
\label{equation:eta-limit}
\frac{1}{m_n} \sum_{j=1}^{n} W_{n j}  \eta(\theta_i, X_j, r_i) \xrightarrow[n\to\infty]{\mathrm{a.s.}} \mathbb{E}\big(\eta(\theta_i, X_1, r_i)\big) < \varepsilon.
\end{align}
Similarly, by Lemma~\ref{lemma:bootstrap-mean} with $Z_j = H(\theta_i,X_j)$,
\begin{align}
\label{equation:H-limit}
\frac{1}{m_n} \sum_{j=1}^{n} W_{n j} H(\theta_i, X_j) \xrightarrow[n\to\infty]{\mathrm{a.s.}} \mu(\theta_i).
\end{align}
Thus, for any $\theta\in\Theta$, by choosing $i$ such that $\theta\in B_{r_i}(\theta_i)$, we have
\begin{align}
\label{equation:H-triangle}
&\bigg\vert\frac{1}{m_n} \sum_{j=1}^n W_{nj} H(\theta, X_j) - \mu(\theta) \bigg\vert\\
&~~~ \leq  \frac{1}{m_n} \sum_{j=1}^{n} W_{nj} \eta(\theta_i, X_j, r_i) + \bigg\vert\frac{1}{m_n} \sum_{j=1}^{n} W_{nj} H(\theta_i ,X_j) - \mu(\theta_i) \bigg\vert  \notag
\end{align}
by the triangle inequality and Equation~\ref{equation:eta}.
Letting $V_{n i}$ denote the right-hand side of Equation~\ref{equation:H-triangle}, we have $V_{n i}\to \mathbb{E}\big(\eta(\theta_i, X_1, r_i)\big) < \varepsilon$ a.s.\ by Equations~\ref{equation:eta-limit} and \ref{equation:H-limit}. Therefore,
\begin{align}
\label{equation:sup}
\sup_{\theta\in\Theta} \bigg\vert\frac{1}{m_n} \sum_{j=1}^n W_{n j} H(\theta, X_j) - \mu(\theta) \bigg\vert 
\leq \max_{1\leq i \leq K} V_{n i} 
\xrightarrow[n\to\infty]{\mathrm{a.s.}}
 \max_{1\leq i \leq K} \mathbb{E}\big(\eta(\theta_i, X_1, r_i)\big) < \varepsilon.
\end{align}
Since $\varepsilon>0$ is arbitrary, Equation~\ref{equation:sup} holds almost surely for $\varepsilon = \varepsilon_k = 1/k$ for all $k\in\mathbb{N}$, completing the proof.
\end{proof}


The following result, due to \cite{chen2019strong}, is a more general version of Lemma~\ref{lemma:bootstrap-mean} that extends beyond the standard bootstrap to negatively orthant dependent (NOD) weights.

\begin{lemma}\label{lemma:general-mean}
Let $Z_1,Z_2,\ldots \in \mathbb{R}$ be identically distributed NOD random variables.
For each $n\in\mathbb{N}$ independently, let $W_{n 1},\ldots,W_{n n}$ be NOD random variables, independent of $(Z_1,Z_2,\ldots)$. Suppose $\mathbb{E}(|Z_1|^{\beta}) < \infty$ and $\sum_{j=1}^n \mathbb{E}(|W_{n j}|^\alpha) = O(n)$ where $p \in [1,2)$ and either (a) $\alpha > 2p$ and $\beta > 1$ satisfy $\alpha^{-1} + \beta^{-1} = p^{-1}$, or (b) the weights $W_{n j}$ are identically distributed for all $n,j$, and $\alpha = \beta = 2 p$. Then
\begin{align} 
\frac{1}{n^{1/p}} \sum_{j=1}^{n} \big( W_{n j} Z_j - \mathbb{E}(W_{n j} Z_j) \big) \xrightarrow[n\to\infty]{\mathrm{a.s.}} 0. 
\end{align}
\end{lemma}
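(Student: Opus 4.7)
The plan is to adapt the classical Marcinkiewicz--Zygmund strong law to a triangular array of NOD weighted sums, combining a truncation argument with a Rosenthal-type moment inequality for NOD random variables.

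First, I would convert the hypotheses into moment bounds on the products $W_{n j} Z_j$. Because the weights are independent of the $Z_j$, H\"older's inequality with conjugate exponents $\alpha/p$ and $\beta/p$ gives
\begin{align*}
\mathbb{E}|W_{n j} Z_j|^p \le \bigl(\mathbb{E}|W_{n j}|^{\alpha}\bigr)^{p/\alpha} \bigl(\mathbb{E}|Z_1|^{\beta}\bigr)^{p/\beta},
\end{align*}
so $\sum_{j=1}^n \mathbb{E}|W_{n j} Z_j|^p = O(n)$ by the assumptions $\sum_j \mathbb{E}|W_{n j}|^{\alpha}=O(n)$ and $\mathbb{E}|Z_1|^{\beta}<\infty$.

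Next, following the Marcinkiewicz--Zygmund template, I would truncate at level $n^{1/p}$: let $Y_{n j} := W_{n j} Z_j \mathbf{1}\{|W_{n j} Z_j|\le n^{1/p}\}$ and $R_{n j} := W_{n j} Z_j - Y_{n j}$. For the tail part, a Borel--Cantelli argument using $\sum_j \mathbb{P}(|W_{n j} Z_j|>n^{1/p})\le n^{-1}\sum_j \mathbb{E}|W_{n j} Z_j|^p = O(1)$ (summed over a geometric subsequence in $n$) shows that $n^{-1/p}\sum_j (R_{n j}-\mathbb{E} R_{n j})\to 0$ almost surely; the centering contribution $\sum_j \mathbb{E} R_{n j}$ is $o(n^{1/p})$ by a routine tail-integration calculation. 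For the centered truncated sum, I would invoke a Rosenthal-type maximal moment inequality for NOD random variables, namely
\begin{align*}
\mathbb{E}\biggl|\sum_{j=1}^n (Y_{n j}-\mathbb{E} Y_{n j})\biggr|^r \le C \biggl(\sum_{j=1}^n \mathbb{E}|Y_{n j}|^r + \Bigl(\sum_{j=1}^n \mathbb{E} Y_{n j}^2\Bigr)^{r/2}\biggr),
\end{align*}
for an even integer $r$ large enough that both terms, divided by $(\varepsilon n^{1/p})^r$, are summable in $n$; Markov's inequality and Borel--Cantelli then yield almost sure convergence of the truncated sum to zero for each $\varepsilon>0$.

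The main obstacle is the triangular-array structure: since the distribution of $W_{n j}$ depends on $n$, Kronecker's lemma and the usual reduction to a single sequence are unavailable, so the Rosenthal-type inequality must be applied uniformly across the array under NOD dependence. The tight interplay between $\alpha$, $\beta$, $p$, and the truncation level $n^{1/p}$ must be tracked carefully: the constraint $\alpha^{-1}+\beta^{-1}=p^{-1}$ with $\alpha>2p$ is precisely what guarantees that a valid exponent $r$ exists so that both $\sum_j \mathbb{E}|Y_{n j}|^r$ and $(\sum_j \mathbb{E} Y_{n j}^2)^{r/2}$ are dominated by $n^{r/p}$ up to constants. Case (b), where the weights are identically distributed with $\alpha=\beta=2p$, requires separate bookkeeping at the boundary of the Hölder step but follows the same overall scheme.
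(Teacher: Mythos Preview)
The paper does not prove this lemma at all; it simply cites \cite{chen2019strong}, Theorem~1 and Corollary~1. So there is no ``paper's proof'' to compare against beyond the fact that the result is imported from that reference.

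Your outline follows the standard Marcinkiewicz--Zygmund template (truncation, Rosenthal-type inequality for NOD variables, Borel--Cantelli), which is indeed the machinery behind Chen's result. However, the sketch as written has two quantitative gaps. First, for the tail part you bound $\sum_{j\le n}\mathbb{P}(|W_{nj}Z_j|>n^{1/p})$ by $O(1)$ and then invoke a geometric-subsequence argument; but $O(1)$ is not summable, and the usual subsequence-to-full-sequence interpolation relies on monotonicity of partial sums of a \emph{single} sequence, which is unavailable here because the weights $W_{nj}$ change with $n$. Second, for the truncated part, saying the Rosenthal terms are ``dominated by $n^{r/p}$ up to constants'' is not enough: dividing by $(\varepsilon n^{1/p})^r$ then yields $O(1)$, not a summable sequence, so Borel--Cantelli does not apply. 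The crude H\"older step $\sum_j\mathbb{E}|W_{nj}Z_j|^p=O(n)$ discards exactly the slack coming from $\alpha>2p$ and $\mathbb{E}|Z_1|^\beta<\infty$ that is needed to push the bound strictly below $n^{r/p}$. The repair is to use the product structure and the separate moment hypotheses more carefully---for instance, truncating $W_{nj}$ at level $n^{1/\alpha}$ and $Z_j$ at level $n^{1/\beta}$ (so that the product is truncated at $n^{1/p}$), and bounding the Rosenthal terms via $\mathbb{E}|W_{nj}|^s\,\mathbb{E}|Z_1|^s$ for suitable $s$; this is where the constraint $\alpha^{-1}+\beta^{-1}=p^{-1}$ with $\alpha>2p$ actually does its work. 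Chen's argument carries out precisely this more delicate bookkeeping.
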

\begin{proof}
See \citet[Theorem 1 and Corollary 1]{chen2019strong} for the proof.
\end{proof}

\begin{proof}[{\bf Proof of Theorem~\ref{theorem:ulln2}}]
The proof is similar to the proof of Theorem~\ref{theorem:ulln1}, except that we use Lemma~\ref{lemma:general-mean} instead of Lemma~\ref{lemma:bootstrap-mean}, and some modifications are needed to handle more general distributions of the weights $W_{n j}$.

As in the proof of Theorem~\ref{theorem:ulln1}, 
define $\eta(\theta,x,r)$ by Equation~\ref{equation:eta}
and $\mu(\theta) := \mathbb{E}\left(H(\theta, X_1) \right)$.
As before, for any $\varepsilon > 0$, there exist $K\in\mathbb{N}$, $\theta_1,\ldots,\theta_K\in\Theta$, and $r_1,\ldots,r_K > 0$ such that $\Theta = \textstyle{\bigcup_{i=1}^K} B_{r_i}(\theta_i)$ and $\mathbb{E}\left(\eta(\theta_i, X_1, r_i) \right) < \varepsilon$
for all $i \in \left\{1, \ldots, K \right\}$. 
Just as in Equation~\ref{equation:eta-bound},
\begin{align*}
   \mathbb{E}\big(\eta(\theta_i, X_1, r_i)^{\beta}\big) 
   \leq 4^{\beta} \mathbb{E}\Big(\sup_{\theta\in\Theta} |H(\theta, X_1)|^{\beta}  \Big) + 4^{\beta} \sup_{\theta\in\Theta} | \mu(\theta)|^{\beta} < \infty.
\end{align*}
For all $i\in\{1,\ldots,K\}$, by applying Lemma~\ref{lemma:general-mean} with $Z_j = 1$ and $Z_j = H(\theta_i,X_j)$, respectively,
\begin{align}
& \frac{1}{n^{1/p}} \sum_{j=1}^{n} \big(W_{n j}  - \mathbb{E}(W_{nj}) \big) \xrightarrow[n\to\infty]{\mathrm{a.s.}}  0, 
\label{equation:1-limit2} \\
& \frac{1}{n^{1/p}} \sum_{j=1}^{n} \big(W_{n j}  H(\theta_i, X_j) - \mathbb{E}\left(W_{nj} \right)\mu(\theta_i) \big) \xrightarrow[n\to\infty]{\mathrm{a.s.}}  0.
\label{equation:H-limit2}
\end{align}
Let $W_{n j}^{+} = \max\{W_{n j},0\}$ and $W_{n j}^{-} = \max\{-W_{n j},0\}$.  Then $(W_{n 1}^{+},\ldots,W_{n n}^{+})$ and $(W_{n 1}^{-},\ldots,W_{n n}^{-})$ are each NOD because they are monotone transformations of the $W_{nj}$'s \citep{chen2019strong}.  Thus, Lemma~\ref{lemma:general-mean} applied to $Z_j = \eta(\theta_i,X_j,r_i)$ with $W_{nj}^{+}$ and $W_{nj}^{-}$, respectively, yields
\begin{align}
\frac{1}{n^{1/p}} \sum_{j=1}^{n}\Big( W^+_{n j}  \eta(\theta_i, X_j, r_i) - \mathbb{E}(W_{nj}^{+})\mathbb{E}\big(\eta(\theta_i, X_1, r_i)\big) \Big) & \xrightarrow[n\to\infty]{\mathrm{a.s.}}  0, \label{equation:eta-limit2pos}\\
\frac{1}{n^{1/p}} \sum_{j=1}^{n}\Big( W^-_{n j}  \eta(\theta_i, X_j, r_i) - \mathbb{E}(W_{nj}^{-})\mathbb{E}\big(\eta(\theta_i, X_1, r_i)\big) \Big) & \xrightarrow[n\to\infty]{\mathrm{a.s.}}  0 \label{equation:eta-limit2neg}.
\end{align}
By adding Equations~\ref{equation:eta-limit2pos} and \ref{equation:eta-limit2neg} and using the fact that $|W_{n j}| = W_{n j}^{+} + W_{n j}^{-}$, we have
\begin{align}
\frac{1}{n^{1/p}} \sum_{j=1}^{n}\Big(|W_{n j}|  \eta(\theta_i, X_j, r_i) - \mathbb{E}\left(|W_{nj}| \right)\mathbb{E}\big(\eta(\theta_i, X_1, r_i)\big) \Big) & \xrightarrow[n\to\infty]{\mathrm{a.s.}}  0, \label{equation:eta-limit2abs}
\end{align}
Since $\mathbb{E}(\eta(\theta_i, X_1, r_i)) < \varepsilon$, Equation~\ref{equation:eta-limit2abs} implies that, almost surely,
\begin{align}
\label{equation:eta-limsup}
\limsup_{n\to\infty}\frac{1}{n^{1/p}} \sum_{j=1}^{n} |W_{n j}| \eta(\theta_i, X_j, r_i) \leq
\limsup_{n\to\infty}\frac{1}{n^{1/p}} \sum_{j=1}^{n} \mathbb{E}(|W_{nj}| )\mathbb{E}\big(\eta(\theta_i, X_1, r_i)\big)
\leq C\varepsilon
\end{align}
where $C := \limsup_{n\to\infty} n^{-1/p} \sum_{j=1}^n \mathbb{E}(|W_{n j}|)$.
Note that $C<\infty$ by the assumption that $\sum_{j=1}^n \mathbb{E}(|W_{n j}|) = O(n^{1/p})$.
For any $\theta\in\Theta$, choosing $i$ such that $\theta\in B_{r_i}(\theta_i)$, we can write
\begin{align}
\label{equation:expand}
W_{n j} H(\theta,X_j) - \mathbb{E}(W_{n j})\mu(\theta)
&= W_{n j}\Big((H(\theta,X_j) - \mu(\theta)) - (H(\theta_i,X_j) - \mu(\theta_i))\Big) \notag\\
&~~~ + (W_{n j} - \mathbb{E}(W_{n j}))  (\mu(\theta) - \mu(\theta_i)) \\
&~~~ + \big(W_{n j} H(\theta_i,X_j) - \mathbb{E}(W_{n j})\mu(\theta_i)\big). \notag
\end{align}

Summing Equation~\ref{equation:expand} over $j$, using the triangle inequality,
and employing Equation~\ref{equation:eta}, 
\begin{align}
\label{equation:tri1}
 \bigg|\frac{1}{n^{1/p}}\sum_{j=1}^{n} \Big( W_{nj} H(\theta, X_j) - \mathbb{E}(W_{nj}) \mu(\theta) \Big) \bigg| 
&\leq \frac{1}{n^{1/p}} \sum_{j=1}^{n}  |W_{nj}| \,\eta(\theta_i,X_j,r_i) \\
& + \Big\vert \frac{1}{n^{1/p}}\sum_{j=1}^{n} \left( W_{nj} - \mathbb{E}(W_{nj})\right) \Big\vert \,\sup_{\theta\in\Theta} |\mu(\theta) - \mu(\theta_i)|\notag\\
& + \Big\vert\frac{1}{n^{1/p}}\sum_{j=1}^{n} \Big(W_{nj} H(\theta_i, X_j) - \mathbb{E}(W_{nj})\mu(\theta_i)\Big) \Big\vert.\notag
\end{align}
Letting $V_{n i}$ denote the right-hand side of Equation~\ref{equation:tri1}, we have $\limsup_{n} V_{n i}\leq C\varepsilon$ a.s.\ by Equations~\ref{equation:eta-limsup}, \ref{equation:1-limit2}, and
\ref{equation:H-limit2}, along with the fact that $\mu(\theta)$ is continuous and $\Theta$ is compact.
Therefore, almost surely,
\begin{align}
\label{equation:sup2}
\limsup_{n\to\infty}\; \sup_{\theta\in\Theta} \bigg\vert\frac{1}{n^{1/p}} \sum_{j=1}^n \Big(W_{n j} H(\theta, X_j) - \mathbb{E}(W_{n j})\mu(\theta)\Big) \bigg\vert 
\leq \limsup_{n\to\infty} \max_{1\leq i \leq K} V_{n i} \leq C \varepsilon.
\end{align}
As before, since $\varepsilon>0$ is arbitrary, Equation~\ref{equation:sup2} holds almost surely for $\varepsilon = \varepsilon_k = 1/k$ for all $k\in\mathbb{N}$, completing the proof.
\end{proof}


\begin{proof}[{\bf Proof of Theorem~\ref{theorem:ulln3}}]
First, we may assume without loss of generality that $W_{n j}$ and $H(\theta,X_j)$ are nonnegative since we can write $W_{n j} = W_{n j}^{+} - W_{n j}^{-}$ and $H(\theta,X_j) = H(\theta,X_j)^{+} - H(\theta,X_j)^{-}$, and apply the result in the nonnegative case to each of $W_{n j}^{+} H(\theta,X_j)^{+}$, $W_{n j}^{+} H(\theta,X_j)^{-}$, $W_{n j}^{-} H(\theta,X_j)^{+}$, and $W_{n j}^{-} H(\theta,X_j)^{-}$ to obtain the result for $W_{n j} H(\theta,X_j)$ in the general case; see the proof of Theorem 1 from \citet{chen2019strong} for a similar argument.

Let $\varepsilon > 0$.
As before, define $\eta(\theta,x,r)$ by Equation~\ref{equation:eta}
and $\mu(\theta) := \mathbb{E}\left(H(\theta, X_1) \right)$.
For $n\in\mathbb{N}$, define $r_n = (\varepsilon / n^{(1-1/p)})^{1/a}$ and $K_n = N(r_n)$, where $N(r)$ is defined in Condition~\ref{condition:covering-number}. 
Let $\theta_{n 1},\ldots,\theta_{n K_n}\in\Theta$ be the centers of $K_n$ balls of radius $r_n$ that cover $\Theta$, that is, $\bigcup_{i=1}^{K_n} B_{r_n}(\theta_{n i})$.

Let $C_W = 1 + \sup_{n} \frac{1}{n}\sum_{j=1}^n \mathbb{E}(W_{n j}^q) < \infty$.  Note that $q > 2 p$ since $p > 1$.
Thus, by Lemma~\ref{lemma:general-mean} with $Z_j = 1$,
\begin{align}
\frac{1}{n}\sum_{j=1}^n W_{n j} 
\leq C_W + \frac{1}{n}\sum_{j=1}^n (W_{n j} - \mathbb{E}(W_{n j}))
\xrightarrow[n\to\infty]{\mathrm{a.s.}} C_W
\end{align}
For all $n$ sufficiently large that $r_n < \delta$, we have 
$\eta(\theta,x,r_n) \leq 2 M r_n^a = 2 M \varepsilon / n^{(1-1/p)}$ by Condition~\ref{condition:holder}, and thus, 
\begin{align}
\label{equation:eta3}
\limsup_{n\to\infty}\max_{i\in[K_n]}\frac{1}{n^{1/p}} \sum_{j=1}^n W_{n j} \eta(\theta_{n i},X_j,r_n) 
\leq 2 M \varepsilon \limsup_{n\to\infty} \frac{1}{n} \sum_{j=1}^n W_{n j} \stackrel{\mathrm{a.s.}}{\leq} 2 M C_W \varepsilon 
\end{align}
where $[K_n] = \{1,\ldots,K_n\}$. By another application of Lemma~\ref{lemma:general-mean} with $Z_j = 1$,
\begin{align}
\label{equation:mu3}
\limsup_{n\to\infty}\max_{i\in[K_n]} \Big\vert \frac{1}{n^{1/p}} \sum_{j=1}^n (W_{n j} - \mathbb{E}(W_{n j}))\Big\vert \sup_{\theta\in\Theta}|\mu(\theta)-\mu(\theta_{n i})| 
\stackrel{\mathrm{a.s.}}{=} 0
\end{align}
since $\max_{i\in[K_n]}\sup_{\theta} |\mu(\theta)-\mu(\theta_{n i})| \leq 2 \sup_{\theta}|\mu(\theta)| < \infty$.
Defining $Y_{n i j} := W_{nj} H(\theta_{n i}, X_j) - \mathbb{E}(W_{n j})\mu(\theta_{n i})$, we claim that 
\begin{align}
\label{equation:Y}
\limsup_{n\to\infty}\max_{i\in[K_n]} \Big\vert\frac{1}{n^{1/p}}\sum_{j=1}^{n} Y_{n i j}\Big\vert \stackrel{\mathrm{a.s.}}{\leq} \varepsilon.
\end{align}
Assuming Equation~\ref{equation:Y} for the moment, we use the same decomposition as in Equation~\ref{equation:tri1}
and plug in Equations~\ref{equation:eta3}--\ref{equation:Y} to obtain
\begin{align*}
\limsup_{n\to\infty} \sup_{\theta\in\Theta}\bigg\vert\frac{1}{n^{1/p}}\sum_{j=1}^{n} & \Big( W_{n j} H(\theta, X_j) - \mathbb{E}(W_{n j}) \mu(\theta) \Big) \bigg\vert
\leq 2 M C_W \varepsilon + 0 + \varepsilon.
\end{align*}
Since $\varepsilon > 0$ is arbitrarily small, this will yield the result of the theorem.

To complete the proof, we need to show Equation~\ref{equation:Y}.
For each $n$ and $i$, by \citet[Lemma 1]{chen2019strong}, $Y_{n i 1},\ldots,Y_{n i n}$ is an NOD sequence since we have assumed without loss of generality that $W_{n j}$ and $H(\theta,X_j)$ are nonnegative, and adding constants preserves the NOD property. \cite{asadian2006rosenthal} and \cite{rivaz2007moment} provide moment inequalities that are useful in this context.
By \citet[Corollary 2.2]{asadian2006rosenthal} (also see \cite[Corollary 3]{rivaz2007moment}), since $\mathbb{E}(Y_{n i j}) = 0$ and $q > 2$, 
\begin{align}
\label{equation:asadian-bound}
\mathbb{E}\Big(\Big\vert\sum_{j=1}^n Y_{n i j}\Big\vert^q\Big)
\leq C_A(q)\sum_{j=1}^n \mathbb{E}(|Y_{n i j}|^q) + C_A(q) \Big(\sum_{j=1}^n \mathbb{E}(|Y_{n i j}|^2)\Big)^{q/2}
\end{align}
where $C_A(q)$ is a universal constant that depends only on $q$.
Letting $C_H = 1 + \sup_{\theta\in\Theta} \mathbb{E}(|H(\theta,X_1)|^q) < \infty$, we have
\begin{align}
\label{equation:Yq}
\sum_{j=1}^n \mathbb{E}(|Y_{n i j}|^q) \leq 
\sum_{j=1}^n 2^{q+1} \mathbb{E}\big(|W_{n j}|^q |H(\theta_{n i},X_j)|^q\big)
\leq 2^{q+1} C_H C_W n.
\end{align}
Likewise, since $\mathrm{Var}(X) \leq \mathbb{E}(X^2)$ for any random variable $X$,
\begin{align}
\label{equation:Y2}
\sum_{j=1}^n \mathbb{E}(|Y_{n i j}|^2) \leq 
\sum_{j=1}^n \mathbb{E}\big(|W_{n j}|^2 |H(\theta_{n i},X_j)|^2\big)
\leq C_H C_W n.
\end{align}
Plugging Equations~\ref{equation:Yq} and \ref{equation:Y2} into Equation~\ref{equation:asadian-bound}, we have
\begin{align}
\label{equation:Sq}
\mathbb{E}\Big(\Big\vert\sum_{j=1}^n Y_{n i j}\Big\vert^q\Big)
\leq C n^{q/2}
\end{align}
where $C = C_A(q)2^{q+1} C_H C_W + C_A(q) (C_H C_W)^{q/2}$.
By Condition~\ref{condition:covering-number}, $K_n = N(r_n) \leq c\, r_n^{-D} = c\, n^{(1-1/p)D/a} / \varepsilon^{D/a}$.
Along with Markov's inequality and Equation~\ref{equation:Sq}, this implies
\begin{align*}
\mathbb{P}\Big(\max_{i\in[K_n]} \Big\vert \frac{1}{n^{1/p}}\sum_{j=1}^n Y_{n i j}\Big\vert > \varepsilon\Big) 
&\leq \sum_{i=1}^{K_n} \mathbb{P}\Big(\Big\vert \frac{1}{n^{1/p}}\sum_{j=1}^n Y_{n i j}\Big\vert > \varepsilon\Big) 
\leq \frac{1}{\varepsilon^q n^{q/p}} \sum_{i=1}^{K_n} \mathbb{E}\Big(\Big\vert \sum_{j=1}^n Y_{n i j}\Big\vert^q\Big) \\
&\leq \frac{K_n C n^{q/2}}{\varepsilon^q n^{q/p}}
\leq \frac{c\, C}{\varepsilon^q \varepsilon^{D/a}} n^{(1-1/p)D/a} \,n^{q(1/2-1/p)}
= \frac{c\, C}{\varepsilon^q \varepsilon^{D/a}} n^{-\gamma}
\end{align*}
where $\gamma := q(1/p-1/2) - (1-1/p)D/a > 1$ because $q > ((1-1/p)D/a + 1)/(1/p-1/2)$ by assumption. Hence, 
\begin{align}
\sum_{n=1}^\infty \mathbb{P}\Big(\max_{i\in[K_n]} \Big\vert \frac{1}{n^{1/p}}\sum_{j=1}^n Y_{n i j}\Big\vert > \varepsilon\Big)
\leq \frac{c\, C}{\varepsilon^q \varepsilon^{D/a}} \sum_{n=1}^\infty n^{-\gamma} < \infty.
\end{align}
Therefore, Equation~\ref{equation:Y} holds by the Borel--Cantelli lemma.  This completes the proof.
\end{proof}

\bibliography{refs.bib}

\begin{thebibliography}{31}
\providecommand{\natexlab}[1]{#1}
\providecommand{\url}[1]{\texttt{#1}}
\expandafter\ifx\csname urlstyle\endcsname\relax
  \providecommand{\doi}[1]{doi: #1}\else
  \providecommand{\doi}{doi: \begingroup \urlstyle{rm}\Url}\fi

\bibitem[Antal and Till{\'e}(2011{\natexlab{a}})]{antal2011direct}
E.~Antal and Y.~Till{\'e}.
\newblock A direct bootstrap method for complex sampling designs from a finite
  population.
\newblock \emph{Journal of the American Statistical Association}, 106\penalty0
  (494):\penalty0 534--543, 2011{\natexlab{a}}.

\bibitem[Antal and Till{\'e}(2011{\natexlab{b}})]{antal2011simple}
E.~Antal and Y.~Till{\'e}.
\newblock Simple random sampling with over-replacement.
\newblock \emph{Journal of Statistical Planning and Inference}, 141\penalty0
  (1):\penalty0 597--601, 2011{\natexlab{b}}.

\bibitem[Arenal-Guti{\'e}rrez et~al.(1996)Arenal-Guti{\'e}rrez, Matr{\'a}n, and
  Cuesta-Albertos]{arenal1996unconditional}
E.~Arenal-Guti{\'e}rrez, C.~Matr{\'a}n, and J.~A. Cuesta-Albertos.
\newblock On the unconditional strong law of large numbers for the bootstrap
  mean.
\newblock \emph{Statistics \& Probability Letters}, 27\penalty0 (1):\penalty0
  49--60, 1996.

\bibitem[Asadian et~al.(2006)Asadian, Fakoor, and
  Bozorgnia]{asadian2006rosenthal}
N.~Asadian, V.~Fakoor, and A.~Bozorgnia.
\newblock Rosenthal's Type Inequalities for Negatively Orthant Dependent Random
  Variables.
\newblock \emph{Journal of the Iranian Statistical Society}, 5\penalty0 (1 and
  2):\penalty0 69--75, 2006.

\bibitem[Athreya et~al.(1984)Athreya, Ghosh, Low, and Sen]{athreya1984laws}
K.~B. Athreya, M.~Ghosh, L.~Y. Low, and P.~K. Sen.
\newblock Laws of large numbers for bootstrapped U-statistics.
\newblock \emph{Journal of Statistical Planning and Inference}, 9\penalty0
  (2):\penalty0 185--194, 1984.

\bibitem[Bickel et~al.(2012)Bickel, G{\"o}tze, and van
  Zwet]{bickel2012resampling}
P.~J. Bickel, F.~G{\"o}tze, and W.~R. van Zwet.
\newblock Resampling fewer than n observations: gains, losses, and remedies for
  losses.
\newblock In \emph{Selected works of Willem van Zwet}, pages 267--297.
  Springer, 2012.

\bibitem[Breiman(1996)]{breiman1996bagging}
L.~Breiman.
\newblock Bagging predictors.
\newblock \emph{Machine Learning}, 24\penalty0 (2):\penalty0 123--140, 1996.

\bibitem[Chatterjee and Bose(2005)]{chatterjee2005generalized}
S.~Chatterjee and A.~Bose.
\newblock Generalized bootstrap for estimating equations.
\newblock \emph{The Annals of Statistics}, 33\penalty0 (1):\penalty0 414--436,
  2005.

\bibitem[Chen et~al.(2019)Chen, Zhang, and Sung]{chen2019strong}
P.~Chen, T.~Zhang, and S.~H. Sung.
\newblock Strong laws for randomly weighted sums of random variables and
  applications in the bootstrap and random design regression.
\newblock \emph{Statistica Sinica}, 29\penalty0 (4):\penalty0 1739--1749, 2019.

\bibitem[Efron(1965)]{efron1965increasing}
B.~Efron.
\newblock Increasing properties of {P\'{o}lya} frequency function.
\newblock \emph{The Annals of Mathematical Statistics}, pages 272--279, 1965.

\bibitem[Efron(1979)]{efron1979bootstrap}
B.~Efron.
\newblock Bootstrap methods: Another look at the jackknife.
\newblock \emph{The Annals of Statistics}, pages 1--26, 1979.

\bibitem[Efron(1982)]{efron1982jackknife}
B.~Efron.
\newblock \emph{The jackknife, the bootstrap and other resampling plans}.
\newblock SIAM, 1982.

\bibitem[Efron and Tibshirani(1994)]{efron1994introduction}
B.~Efron and R.~J. Tibshirani.
\newblock \emph{An introduction to the bootstrap}.
\newblock CRC press, 1994.

\bibitem[Folland(2013)]{folland2013real}
G.~B. Folland.
\newblock \emph{{Real Analysis: Modern Techniques and Their Applications}}.
\newblock John Wiley \& Sons, 2013.

\bibitem[Ghosh and Ramamoorthi(2003)]{GhoshJ.K2003BN}
J.~K. Ghosh and R.~V. Ramamoorthi.
\newblock \emph{Bayesian Nonparametrics}.
\newblock Springer Series in Statistics. Springer, New York, NY, 2003.
\newblock ISBN 0387955372.

\bibitem[Gin{\'e} and Zinn(1990)]{gine1990bootstrapping}
E.~Gin{\'e} and J.~Zinn.
\newblock Bootstrapping general empirical measures.
\newblock \emph{The Annals of Probability}, pages 851--869, 1990.

\bibitem[Huggins and Miller(2019)]{huggins2019robust}
J.~H. Huggins and J.~W. Miller.
\newblock Robust inference and model criticism using bagged posteriors.
\newblock \emph{arXiv preprint arXiv:1912.07104}, 2019.

\bibitem[Huggins and Miller(2022)]{10.1214/21-BA1301}
J.~H. Huggins and J.~W. Miller.
\newblock {Reproducible model selection using bagged posteriors}.
\newblock \emph{Bayesian Analysis}, pages 1--26, 2022.

\bibitem[Joag-Dev and Proschan(1983)]{joag1983negative}
K.~Joag-Dev and F.~Proschan.
\newblock Negative association of random variables with applications.
\newblock \emph{The Annals of Statistics}, pages 286--295, 1983.

\bibitem[Kosorok(2008)]{kosorok2008introduction}
M.~R. Kosorok.
\newblock \emph{Introduction to Empirical Processes and Semiparametric
  Inference}.
\newblock Springer, 2008.

\bibitem[Lehmann(1966)]{lehmann1966some}
E.~L. Lehmann.
\newblock Some concepts of dependence.
\newblock \emph{The Annals of Mathematical Statistics}, 37\penalty0
  (5):\penalty0 1137--1153, 1966.

\bibitem[Little and Badawy(2019)]{little2019causal}
M.~A. Little and R.~Badawy.
\newblock Causal bootstrapping.
\newblock \emph{arXiv preprint arXiv:1910.09648}, 2019.

\bibitem[Nelsen(2006)]{nelsen2006introduction}
R.~B. Nelsen.
\newblock \emph{An Introduction to Copulas}.
\newblock Springer, 2006.

\bibitem[Newton and Raftery(1994)]{newton1994approximate}
M.~A. Newton and A.~E. Raftery.
\newblock Approximate {Bayesian} inference with the weighted likelihood
  bootstrap.
\newblock \emph{Journal of the Royal Statistical Society: Series B
  (Methodological)}, 56\penalty0 (1):\penalty0 3--26, 1994.

\bibitem[Patak and Beaumont(2009)]{patak2009generalized}
Z.~Patak and J.-F. Beaumont.
\newblock Generalized bootstrap for prices surveys.
\newblock \emph{57th Session of the International Statistical Institute,
  Durban, South-Africa}, 2009.

\bibitem[Rivaz et~al.(2007)Rivaz, Amini, and Bozorgnia]{rivaz2007moment}
F.~Rivaz, M.~Amini, and A.~G. Bozorgnia.
\newblock Moment Inequalities and Applications for Negative Dependence Random
  Variables.
\newblock 26\penalty0 (4):\penalty0 7--11, 2007.

\bibitem[Rubin(1981)]{rubin1981bayesian}
D.~B. Rubin.
\newblock The {Bayesian} bootstrap.
\newblock \emph{The Annals of Statistics}, pages 130--134, 1981.

\bibitem[Shalev-Shwartz and Ben-David(2014)]{shalev2014understanding}
S.~Shalev-Shwartz and S.~Ben-David.
\newblock \emph{Understanding machine learning: From theory to algorithms}.
\newblock Cambridge university press, 2014.

\bibitem[Singh(1981)]{singh1981asymptotic}
K.~Singh.
\newblock On the asymptotic accuracy of {Efron's} bootstrap.
\newblock \emph{The Annals of Statistics}, pages 1187--1195, 1981.

\bibitem[Singh and Xie(2003)]{singh2003bootlier}
K.~Singh and M.~Xie.
\newblock Bootlier-plot: bootstrap based outlier detection plot.
\newblock \emph{Sankhy{\=a}: The Indian Journal of Statistics}, pages 532--559,
  2003.

\bibitem[Vaart and Wellner(1996)]{vaart1996weak}
A.~W. Vaart and J.~A. Wellner.
\newblock \emph{Weak Convergence and Empirical Processes}.
\newblock Springer, 1996.

\end{thebibliography}
\bibliographystyle{abbrvnatcap}

\end{document}